\numberwithin{equation}{section}
\newtheorem{Theorem}[equation]{Theorem}
\newtheorem{Lemma}[equation]{Lemma}
\def\XXint#1#2#3{{\setbox0=\hbox{$#1{#2#3}{\int}$}
\vcenter{\hbox{$#2#3$}}\kern-.5\wd0}}
\def\bbR{\mathbb{R}}
\def\ch{\rm{ch}}
\def\dw{\textup{d}w}
\begin{document}

\title[On $A_p$-$A_\infty$ type estimates for square functions]{On $A_p$-$A_\infty$ type estimates for square functions}

\author[M. Lacey]{Michael T. Lacey}   

\address{ School of Mathematics, Georgia Institute of Technology, Atlanta GA 30332, USA}
\email {lacey@math.gatech.edu}
\thanks{The first author was supported in part by grant NSF-DMS 1265570. }

\author{Kangwei Li}
\address{School of Mathematical Sciences and LPMC,  Nankai University,
      Tianjin~300071, China}
\email{likangwei9@mail.nankai.edu.cn}

\date{\today}

\keywords{$A_p$-$A_\infty$ estimates; square functions; sparse operators; entropy bump conditions}
\subjclass[2010]{42B25}

\begin{abstract}
We prove strong-type $A_p$-$A_\infty$ estimate for square functions, improving on the $ A_p$ bound  due to Lerner.
Entropy bounds, in the recent innovation of Treil-Volberg, are then proved.
The techniques of proof include parallel stopping cubes, pigeon-hole arguments, and the approach to entropy
bounds of Lacey--Spencer.
\end{abstract}

\maketitle

\section{Introduction}

What are the weakest `$ A_p$ like' conditions that are sufficient for two weight inequalities for square functions?
Replacing square functions by  Calder\'on-Zygmund operators,
this question has received wide ranging attention since the birth of the weighted theory.
The  finest results known are  the $ A_p$-$A_ \infty $ bounds \cite{HL};  the mixed $A_p$-$A_r$ inequalities of Lerner \cite{Lerner3}, and the
entropy bounds of Treil-Volberg \cite{TV}, and the weak-type bounds of \cite{DLR}.  We refer the reader to the introductions
of these papers for a guide to the long history of this question.
The analog of these results for strong-type bounds for
square functions are the focus of this paper.   (The reference \cite{DLR} also includes weak-type estimates for square functions.)

We begin with the definition of the intrinsic square functions introduced by Wilson \cite{Wilson}. For $0<\alpha\le 1$, let $\mathcal{C}_\alpha$ be the family of functions supported in $\{x: |x|\le 1\}$, satisfying $\int \varphi=0$, and such that for all $x$ and $x'$,
$|\varphi(x)-\varphi(x')|\le |x-x'|^\alpha$. If $f\in L_{\rm{loc}}^1(\bbR^n)$ and
$(y,t)\in \bbR_+^{n+1}$, we define
\[
  A_\alpha(f)(y,t)=\sup_{\varphi\in\mathcal{C}_\alpha}|f*\varphi_t(y)|.
\]
Then the intrinsic square function is defined by
\[
  G_{\beta,\alpha}(f)(x)=\bigg(\int_{\Gamma_\beta(x)}(A_\alpha(f)(y,t))^2\frac{dy dt}{t^{n+1}}\bigg)^{1/2},
\]
where $\Gamma_\beta(x)=\{(y,t): |y-x|<\beta t\}$. If $\beta=1$, set $G_{1,\alpha}(f)=G_\alpha(f)$. Wilson showed that
$G_{\beta,\alpha}(f)\sim G_\alpha(f)$ and it dominates the continuous type square functions including the Lusin area function and Littlewood-Paley $g$ function. Therefore, we only focus on $G_\alpha(f)$.

In \cite{Lerner1}, Lerner gave the following estimate
\begin{equation}\label{eq:sharp}
\|G_\alpha \|_{L^p(w)}\le c(G_\alpha, n, p) [w]_{A_p}^{\max\{\frac 12, \frac 1{p-1}\}}.
\end{equation}
 This estimate is sharp in the exponent of $ [w]_{A_p}$
and hence is the square function analog of the $ A_2$ bound for Calder\'on-Zygmund operators,
proved by Hyt\"onen  \cite{Hytonen2012}.

Lerner \cite{Lerner3}, has established  mixed $A_p$-$A_r$ estimates when $p\ge 3$.
These estimates only involve a single supremum to define, and are restricted to the one weight setting.
The interested reader should refer to \cite{Lerner3}.

Our focus is on the two weight  $A_p$-$A_\infty$ type estimates for square functions.
Given a pair of weights $w $ and $\sigma$, define  $\langle w\rangle_Q:=\frac 1 {|Q|} \int_Q w(x) dx$,
\begin{gather*}
[w,\sigma]_{A_p}:=\sup_Q \langle w\rangle_Q \langle \sigma\rangle_Q^{p-1},
\\
\textup{and} \qquad [w]_{A_\infty}:=\sup_Q \frac 1{w(Q)}\int_Q M(w\mathbf 1_Q) dx.
\end{gather*}
Our first result is the following

\begin{Theorem}\label{thm:m1}
Given $1<p<\infty$. Let $w $ and $\sigma$ be a pair of weights such that $[w,\sigma]_{A_p}<\infty$ and $w,\sigma\in A_\infty$.
Then
\begin{equation}\label{eq:apainfty}
\|G_\alpha (\cdot \sigma)\|_{L^p(\sigma)\rightarrow L^p(w)} \lesssim
 \begin{cases}
  [w,\sigma]_{A_p}^{\frac 1p}[\sigma]_{A_\infty}^{\frac 1p}, & 1<p\le 2, \\
  [w,\sigma]_{A_p}^{\frac 1p}([w]_{A_\infty}^{\frac 12-\frac 1p}+[\sigma]_{A_\infty}^{\frac 1p}), & p>2.
 \end{cases}
\end{equation}
where the constant $C$ is independent of the weights $w$ and $\sigma$.
\end{Theorem}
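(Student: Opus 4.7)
The plan is to reduce $G_\alpha$ to a sparse quadratic operator and then analyze that operator via parallel stopping cubes. By Lerner's sparse domination for $G_\alpha$, it suffices to prove \eqref{eq:apainfty} with $G_\alpha(\cdot\sigma)$ replaced by the sparse square function $\mathcal{A}_{\mathcal{S}}(g) := \bigl(\sum_{Q \in \mathcal{S}} \langle g\rangle_Q^2 \mathbf{1}_Q\bigr)^{1/2}$, uniformly over sparse families $\mathcal{S}$. The identity $\langle f\sigma\rangle_Q = \langle f\rangle_Q^\sigma \langle\sigma\rangle_Q$ will separate the $A_p$ and $A_\infty$ factors cleanly throughout.

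For $1 < p \le 2$, the subadditivity $\bigl(\sum a_Q\bigr)^{p/2} \le \sum a_Q^{p/2}$ (valid because $p/2 \le 1$) yields
\[
\|\mathcal{A}_{\mathcal{S}}(f\sigma)\|_{L^p(w)}^p \le \sum_{Q \in \mathcal{S}} (\langle f\rangle_Q^\sigma)^p \langle\sigma\rangle_Q^{p-1}\langle w\rangle_Q \sigma(Q) \le [w,\sigma]_{A_p} \sum_{Q\in\mathcal{S}} (\langle f\rangle_Q^\sigma)^p \sigma(Q).
\]
Since $\mathcal{S}$ is Lebesgue-sparse and $\sigma \in A_\infty$, the sequence $\{\sigma(Q)\}_{Q \in \mathcal{S}}$ is $\sigma$-Carleson with constant $\lesssim [\sigma]_{A_\infty}$: using the disjoint major subsets $E_Q \subseteq Q$ of the sparse family, one gets $\sum_{Q \subseteq R}\sigma(Q) \lesssim \int_R M(\sigma\mathbf{1}_R)\,dx \le [\sigma]_{A_\infty}\sigma(R)$. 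The Carleson embedding theorem then produces $\sum_Q (\langle f\rangle_Q^\sigma)^p \sigma(Q) \lesssim [\sigma]_{A_\infty}\|f\|_{L^p(\sigma)}^p$, and taking $p$-th roots gives $[w,\sigma]_{A_p}^{1/p}[\sigma]_{A_\infty}^{1/p}$.

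For $p > 2$ the subadditivity trick fails and I would dualize in the square:
\[
\|\mathcal{A}_{\mathcal{S}}(f\sigma)\|_{L^p(w)}^2 = \sup_{h} \sum_{Q \in \mathcal{S}} (\langle f\rangle_Q^\sigma)^2 \langle h\rangle_Q^w \langle\sigma\rangle_Q^2 \langle w\rangle_Q |Q|,
\]
with $h \ge 0$ and $\|h\|_{L^{(p/2)'}(w)} = 1$. The parallel-stopping-cubes machinery then enters: construct a stopping tree $\mathcal{F}$ for $f$ against $\sigma$-averages and a stopping tree $\mathcal{G}$ for $h$ against $w$-averages, each satisfying the standard Carleson property (constants $\lesssim [\sigma]_{A_\infty}$ and $\lesssim [w]_{A_\infty}$ respectively). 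Each $Q \in \mathcal{S}$ is routed through its pair $(F, G) = (\pi_{\mathcal{F}} Q, \pi_{\mathcal{G}} Q)$, with $\langle f\rangle_Q^\sigma \lesssim \langle f\rangle_F^\sigma$ and $\langle h\rangle_Q^w \lesssim \langle h\rangle_G^w$. Pigeonholing on whether $F \subseteq G$ or $G \subsetneq F$ splits the sum into two asymmetric pieces; in each piece, using $[w,\sigma]_{A_p}$ to absorb the $\langle\sigma\rangle_Q^{p-1}\langle w\rangle_Q$ factor and then summing over stopping cubes via Hölder (with indices $p, p', (p/2)'$) together with the two Carleson embeddings delivers the bound $[w,\sigma]_{A_p}^{1/p}\bigl([w]_{A_\infty}^{1/2-1/p} + [\sigma]_{A_\infty}^{1/p}\bigr)$.

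The main obstacle is exactly the exponent bookkeeping in the $p > 2$ case: one must split the $A_p$ factor $\langle\sigma\rangle_Q^{p-1}\langle w\rangle_Q$ and distribute the Hölder exponents across the two stopping trees so that precisely the powers $1/p$ on $[\sigma]_{A_\infty}$ and $1/2-1/p$ on $[w]_{A_\infty}$ fall out. The two-sided pigeonhole on the nesting of $(F,G)$ is the mechanism that produces the sum of two terms in \eqref{eq:apainfty}, and matching all Hölder indices against $(p/2)'$ and the two $A_\infty$ Carleson embeddings is the core technical difficulty.
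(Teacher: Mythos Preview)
Your $1<p\le 2$ argument is correct and essentially matches the paper's (the paper phrases it through principal cubes for $f$, but the core is identical: subadditivity of $(\cdot)^{p/2}$, the $A_p$ bound, and the $A_\infty$ Carleson embedding $\sum_{Q\subseteq R}\sigma(Q)\lesssim [\sigma]_{A_\infty}\sigma(R)$).

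For $p>2$ your outline is in the right direction but has two concrete gaps.

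First, a misattribution: the Carleson constants for the stopping trees $\mathcal F$ and $\mathcal G$ are \emph{universal} (they come from the doubling criterion $\langle f\rangle_Q^\sigma>2\langle f\rangle_F^\sigma$), not $[\sigma]_{A_\infty}$ and $[w]_{A_\infty}$. The $A_\infty$ constants enter later, when one sums $\sigma(Q)$ or $w(Q)$ over the sparse cubes inside a fixed stopping cube. This matters because it obscures where the real work lies.

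Second, and more seriously: you cannot simply ``use $[w,\sigma]_{A_p}$ to absorb $\langle\sigma\rangle_Q^{p-1}\langle w\rangle_Q$.'' After the parallel corona the inner sum you face is $\sum_Q\langle\sigma\rangle_Q^{2}w(Q)$, and for $p\neq 2$ there is no direct way to extract the full $A_p$ characteristic from this. The paper's essential extra device is to first \emph{pigeonhole on the $A_p$ level}, restricting to cubes with $\langle\sigma\rangle_Q^{p-1}\langle w\rangle_Q\simeq 2^a$. On such a collection one may freely trade $\langle\sigma\rangle_Q$ against $\langle w\rangle_Q$, writing for example
\[
\langle\sigma\rangle_Q^{2}w(Q)\simeq 2^{a}\langle\sigma\rangle_Q^{3-p}|Q|\quad(2<p\le 3),\qquad
\langle\sigma\rangle_Q^{2}w(Q)\simeq 2^{\frac{2a}{p-1}}w(Q)^{1-\frac{2}{p-1}}|Q|^{\frac{2}{p-1}}\quad(p\ge 3).
\]
These rewritings are exactly what make the H\"older bookkeeping close with the exponents $\tfrac12-\tfrac1p$ on $[w]_{A_\infty}$ and $\tfrac1p$ on $[\sigma]_{A_\infty}$; the case split at $p=3$ is forced by the sign of $3-p$. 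Without this pigeonhole step your plan will not produce the claimed powers. For the piece dual to the one above, the paper does not use raw H\"older either but invokes a known sparse testing bound (Hyt\"onen--Lacey): $\|A^{1}_{\mathcal S(F)}(\sigma\mathbf 1_F)\|_{L^p(w)}^p\lesssim [w,\sigma]_{A_p}[\sigma]_{A_\infty}\sigma(F)$. You will need that input, or an equivalent, to close the $[\sigma]_{A_\infty}^{1/p}$ term.
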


Specializing this to the one weight case, we have $ [\sigma ] _{A _{\infty } } \lesssim  [\sigma ] _{A_ {p'}} = [w] _{A_p} ^{\frac 1{p-1}}$,
and so  Lerner's bound \eqref{eq:sharp} follows from the Theorem, as can be checked by
 elementary considerations.
This is interesting, since the inequalities \eqref{eq:apainfty} have $ p=2$ as a critical index, while Lerner's bound has
$ p=3$ as the critical index. We find that this reflected in the proof of the result above, with one term  splitting neatly
at $ p=2$, and another splitting at $ p=2$ and at $ p=3$.

Concerning the proof, we will use the common reduction to a positive sparse square function.
In the two weight setting, we have a characterization of the required inequality in terms of
(quadratic) testing assumptions  \cite{Scurry,Hanninen,Vuorinen}.  These conditions are however
difficult to work with.  Instead, we  use the parallel stopping cubes introduced Lacey, Sawyer, Shen and Uriarte-Tuero \cite{LSSU}, as elaborated in the last section of \cite{Hytonen}.

\smallskip
The second topic is to prove entropy bounds for the square function. Here, we are using the recent
innovative approach of Treil-Volberg \cite{TV}, which is an improvement over the
Orlicz norm approach to `bumping', started by C.~P{\'e}rez \cite{P}.  Again, the reader should consult
\cite{TV} for a history of this point of view, and an explanation of why the entropy method is
stronger than that of Orlicz norm approach.

There is a very close connection between the entropy bounds and $ A _{\infty }$ bounds, a feature
exploited by Lacey-Spencer \cite{LS}.  The entropy conditions are given in terms of the `local $ A _{\infty }$'
constant, which is allowed to take arbitrarily large values, at the cost of a logarithmic penalty.
\[
\rho_\sigma (Q)= \frac {\int_Q M(\sigma \mathbf 1_Q)dx}{\sigma(Q)}\quad \mbox{and} \quad \rho_{\sigma,\epsilon}(Q)=\rho_\sigma(Q)\epsilon(\rho_\sigma (Q)),
\]
where $\epsilon$ is a monotonic increasing function on $(1,\infty)$.  The $ \epsilon $ gives the penalty on
a locally large $ A _{\infty }$ constant.  The entropy conditions then come in two forms, one in which
both weights are `bumped' in a multiplicative manner,
\[
\lceil w,\sigma\rceil_{p,\epsilon}:=
\begin{cases}
\sup_Q\langle w\rangle_Q^{\frac 1p} \langle\sigma\rangle_Q^{\frac 1{p'}}\rho_{\sigma,\epsilon}(Q)^{\frac 1p}, & 1<p\le 2,\\
\sup_Q\langle w\rangle_Q^{\frac 1p} \langle\sigma\rangle_Q^{\frac 1{p'}}\rho_{w,\epsilon}(Q)^{\frac 12-\frac 1p}\rho_{\sigma,\epsilon}(Q)^{\frac 1p}, & p>2.
\end{cases}
\]
and the other in an additive or `separated' fashion,
\[
\lfloor w,\sigma\rfloor_{p,\epsilon,\eta}:=
\begin{cases}
\sup_Q \langle w\rangle_Q^{\frac 1p} \langle\sigma\rangle_Q^{\frac 1{p'}}\rho_{\sigma,\epsilon}(Q)^{\frac 1p}, & 1<p\le 2,\\
\sup_Q\langle w\rangle_Q^{\frac 1p} \langle\sigma\rangle_Q^{\frac 1{p'}}(\rho_{w,\eta}(Q)^{\frac 12-\frac 1p}+\rho_{\sigma,\epsilon}(Q)^{\frac 1p}), & p>2,
\end{cases}
\]
where $\eta$ is another monotonic increasing function on $(1,\infty)$.
Now we are ready to state our entropy bounds for square functions.
These inequalities are the analogs of the main results in \cite{LS}, and the proof is along the
lines of that paper.

\begin{Theorem}\label{thm:m2}
Let $(w,\sigma)$ be a pair of weights and $\epsilon,\eta$ be two monotonic increasing functions on $(1,\infty)$. If $\epsilon$ satisfy
\begin{eqnarray*}
\begin{cases}
 \int_1^\infty \frac 1{t\epsilon(t)^{1/p}} dt<\infty,& 1<p\le 2\\
  \int_1^\infty \frac 1{t\epsilon(t)} dt<\infty,& p>2,
 \end{cases}
\end{eqnarray*}
 then there holds
\begin{equation}\label{eq:entropy}
\|G_\alpha(\cdot \sigma)\|_{L^p(\sigma)\rightarrow L^p(w)} \lesssim \lceil w,\sigma\rceil_{p,\epsilon}.
\end{equation}
For any $1<p<\infty$ and $\epsilon, \eta$ satisfy
\begin{eqnarray*}
\begin{cases}
 \int_1^\infty \frac 1{t\epsilon(t)^{1/p}} dt<\infty,& 1<p\le 2\\
 \int_1^\infty \frac 1{t\epsilon(t)^{1/p}}dt +\int_1^\infty \frac 1{t\eta(t)^{\frac 12-\frac 1p}} dt<\infty, & p>2.
\end{cases}
\end{eqnarray*} then there holds
\begin{equation}\label{eq:separated}
\|G_\alpha(\cdot \sigma)\|_{L^p(\sigma)\rightarrow L^p(w)} \lesssim \lfloor w,\sigma\rfloor_{p,\epsilon,\eta}.
\end{equation}
\end{Theorem}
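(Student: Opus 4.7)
My plan is to derive both entropy bounds \eqref{eq:entropy} and \eqref{eq:separated} from Theorem~\ref{thm:m1} via the entropy-adapted stopping strategy of Lacey--Spencer~\cite{LS}, which the introduction explicitly identifies as the template. The first step reduces $G_\alpha$ to its positive dyadic sparse square function, exactly as in the proof of Theorem~\ref{thm:m1}, so that all subsequent work takes place inside a sparse collection $\mathcal{S}$ of dyadic cubes. On $\mathcal{S}$ I build stopping trees tailored to the bump: for any weight $\mu\in\{w,\sigma\}$ that feeds an $A_\infty$ error term, declare $Q'\subset Q$ a stopping descendant once $\rho_\mu(Q')$ has at least doubled relative to the nearest ancestor stop. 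Between consecutive generations of such a tree the local $A_\infty$ characteristic of $\mu$ is comparable to one dyadic level $2^k$, so the sparse form underlying Theorem~\ref{thm:m1} applies on each level's subfamily with an explicit numerical $A_\infty$ constant.

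For the separated bound \eqref{eq:separated}, the additive structure $[w]_{A_\infty}^{1/2-1/p}+[\sigma]_{A_\infty}^{1/p}$ lets the two pigeonholes be handled independently. The $k$-th generation of the $\sigma$-tree contributes roughly $\lfloor w,\sigma\rfloor_{p,\epsilon,\eta}\cdot 2^{k/p}/(2^k\epsilon(2^k))^{1/p} = \lfloor w,\sigma\rfloor_{p,\epsilon,\eta}\cdot \epsilon(2^k)^{-1/p}$, whose sum in $k$ converges precisely under $\int_1^\infty dt/(t\epsilon(t)^{1/p})<\infty$. When $p>2$ the parallel $w$-tree with bump $\eta$ is treated symmetrically and produces the $\eta$-integrability condition; when $1<p\le 2$ only the $\sigma$-tree is needed. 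The telescoping sums match the stated Dini conditions verbatim.

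The main obstacle lies in the multiplicative bound \eqref{eq:entropy} for $p>2$. There the bump is a \emph{product} $\rho_{w,\epsilon}^{1/2-1/p}\rho_{\sigma,\epsilon}^{1/p}$, and a naive double pigeonhole on $\rho_w$ and $\rho_\sigma$ separately would introduce two independent Dini integrals, one in $\epsilon^{1/p}$ and one in $\epsilon^{1/2-1/p}$, each of which is strictly stronger than the stated single condition $\int_1^\infty dt/(t\epsilon(t))<\infty$. To reach the paper's weaker hypothesis I plan, following \cite{LS}, to replace the two one-parameter trees by a single stopping construction that doubles directly on the combined quantity $\rho_{w,\epsilon}^{1/2-1/p}\rho_{\sigma,\epsilon}^{1/p}$. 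This collapses the two stratification parameters into one, so that the cumulative penalty across generations becomes a single geometric series controlled by $\sum_k 1/\epsilon(2^k)$, which is exactly the stated integrability. Verifying that local $A_\infty$ constants on the subfamilies cut out by this combined tree remain controlled enough to invoke Theorem~\ref{thm:m1} with a clean numerical dependence is the delicate technical step, and will require a Bellman-style accounting in the spirit of \cite{LS}.
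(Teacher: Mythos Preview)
Your treatment of the separated bound \eqref{eq:separated} is essentially the paper's: the paper also pigeonholes the sparse family into subcollections $\mathcal S_{a,b}$ and $\mathcal S'_{a,b}$ according to the level of the bumped $A_p$ characteristic and of $\rho_\sigma$ (resp.\ $\rho_w$), and then invokes the $A_p$--$A_\infty$ bound (Theorem~\ref{thm:p}) in its localized form \eqref{e:reduce} on each piece; the summation over $b$ reproduces exactly the Dini integrals you wrote down. Your stopping-tree language is a harmless variant.

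The genuine gap is in your plan for the multiplicative bound \eqref{eq:entropy} when $p>2$. Stratifying on the product $\rho_{w,\epsilon}(Q)^{1/2-1/p}\rho_{\sigma,\epsilon}(Q)^{1/p}$ does \emph{not} control the individual quantities $\rho_w(Q)$ and $\rho_\sigma(Q)$ on each stratum: one factor can be arbitrarily large provided the other compensates. Since Theorem~\ref{thm:m1} (equivalently Theorem~\ref{thm:p}) requires the two $A_\infty$ constants separately, you cannot feed it only the product, and no ``Bellman-style accounting'' will manufacture the missing individual control out of the combined tree. This is not a technical delicacy but a structural obstruction to the route you propose.

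The paper avoids this entirely by \emph{not} using Theorem~\ref{thm:m1} for \eqref{eq:entropy}. Instead it linearizes $\lVert A_{\mathcal S}^r(f\sigma)\rVert_{L^p(w)}^r$ by duality against $g\in L^{(p/r)'}(w)$, freezes the level $2^a$ of the full entropy quantity, and for each $Q$ writes
\[
(\langle f\rangle_Q^\sigma)^r\langle g\rangle_Q^w\langle\sigma\rangle_Q^r w(Q)
\lesssim 2^{ar}\,(\langle f\rangle_Q^\sigma)^r\frac{\sigma(Q)^{r/p}}{\rho_{\sigma,\epsilon}(Q)^{r/p}}\cdot
\langle g\rangle_Q^w\frac{w(Q)^{1/q}}{\rho_{w,\epsilon}(Q)^{1/q}},
\]
with $q=(p/r)'$. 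A single H\"older then splits the sum, and the key input is the Carleson embedding Lemma~\ref{lm:ls} (from \cite{TV}), applied once to the $f$--$\sigma$ sum and once to the $g$--$w$ sum. Each application consumes the hypothesis $\int_1^\infty dt/(t\epsilon(t))<\infty$ exactly once, which is why only that single Dini condition is needed. The upshot: for the multiplicative bound you should bypass the $A_p$--$A_\infty$ theorem altogether and work directly with the bilinear form and Lemma~\ref{lm:ls}.
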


\section{Proof of Theorem~\ref{thm:m1}}
The first step is, as is fundamental in this subject, the reduction to positive sparse operators.
A collection of dyadic cubes $\mathcal S$ is said to be \emph{sparse} if for all $ Q\in \mathcal S$,
\[
\Bigl\lvert \bigcup_{\substack{Q',Q\in \mathcal S\\Q'\subsetneq Q}} Q' \Bigr\rvert \le \frac 12 |Q|.
\]
Then the positive sparse operator related to $\mathcal S$ is defined by
\[
A_{\mathcal S} (f)=\bigg(\sum_{Q\in \mathcal S} \langle f\rangle_{ Q}^2 \mathbf 1_Q\bigg)^{1/2}.
\]
It is well known that there are many dyadic grids, and moreover, there are at most $ 3 ^{n}$ choices of
dyadic grids in $ \mathbb R ^{n}$ so that \emph{any} cube in $ \mathbb R ^{n}$ is well-approximated by
a choice of cube from one of the specified dyadic grids.

The following lemma is a variant of the argument in \cite{L_A2}.
\begin{Lemma}\label{lm:3d}
If $ f$ is bounded and compactly supported, there are at most $ 3 ^{n}$ sparse collections of dyadic cubes $ \mathcal S_j$,
$ 1\le j \le 3 ^{n}$,  so that the pointwise inequality below holds.
\begin{equation*}
G_\alpha f \lesssim \sum_{j=1} ^{3 ^{n}} A _{\mathcal S_j} f .
\end{equation*}
\end{Lemma}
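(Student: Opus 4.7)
The plan is to adapt Lerner's pointwise sparse domination strategy from his simplified proof of the $A_2$ theorem to the intrinsic square function. The factor $3^n$ arises from the classical device of working with the $3^n$ adjacent (shifted) dyadic lattices $\mathcal D^{(j)}\subset\mathbb R^n$: every Euclidean ball is contained in a dyadic cube of comparable side from some $\mathcal D^{(j)}$, so it suffices to produce, for each $j$, a single sparse family $\mathcal S_j\subset\mathcal D^{(j)}$ dominating the piece of $G_\alpha f$ captured by that grid, and then sum.

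Fix one grid $\mathcal D$ and a dyadic cube $Q_0\in\mathcal D$ whose triple contains the support of $f$. Partition $\mathbb R^{n+1}_+$ into Whitney regions $W_Q=Q\times[\ell(Q)/2,\ell(Q))$, one per $Q\in\mathcal D$, and use the support condition $\supp\varphi\subset\{|x|\le 1\}$ for $\varphi\in\mathcal C_\alpha$ to observe that $A_\alpha f(y,t)$ for $(y,t)\in W_Q$ depends only on $f|_{3Q}$. Build $\mathcal S$ by the standard Calder\'on--Zygmund recursion: put $Q_0\in\mathcal S$, and at each step adjoin the maximal dyadic $Q'\subsetneq Q$ with $\langle|f|\rangle_{3Q'}>2\langle|f|\rangle_{3Q}$. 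The weak-type $(1,1)$ bound for the maximal operator gives $\sum_{Q'}|Q'|\le\tfrac12|Q|$, so $\mathcal S$ is sparse.

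The heart of the argument is the following per-cube quadratic estimate: for $Q\in\mathcal S$ and $x\in Q$,
\[
 \sum_{\substack{Q''\in\mathcal D,\ \widehat{Q''}\ni x\\ Q''\subseteq Q,\ Q''\not\subseteq Q'\text{ for any child }Q'\in\mathcal S}} \int_{W_{Q''}} A_\alpha f(y,t)^2\,\frac{dy\,dt}{t^{n+1}} \ \lesssim\ \langle|f|\rangle_{3Q}^2,
\]
where $\widehat{Q''}$ is the aperture-induced enlargement of $Q''$. The cancellation $\int\varphi=0$ allows one to replace $f$ by $f-\langle f\rangle_{3Q}$ inside $A_\alpha$ at these Whitney scales; combined with the stopping control $\langle|f|\rangle_{3Q''}\le 2\langle|f|\rangle_{3Q}$ and the $L^2$ orthogonality of the defining $(y,t)$-integral (the same orthogonality that underlies the unweighted $L^2$-boundedness of $G_\alpha$), the contribution from infinitely many intermediate scales collapses to a single copy of $\langle|f|\rangle_{3Q}^2$. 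Telescoping this over the chain of $\mathcal S$-ancestors of $x$ and invoking the geometric decay $\langle|f|\rangle_{3Q}\le 2^{-k}\langle|f|\rangle_{3P}$ for the $k$-th stopping ancestor $P$ of $Q$ yields $G_\alpha f(x)^2\lesssim A_{\mathcal S}f(x)^2$, as required.

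The principal obstacle is precisely the per-cube quadratic estimate. The naive pointwise bound $A_\alpha f(y,t)\lesssim\langle|f|\rangle_{B(y,t)}$, if summed over the Whitney cubes sitting between $Q$ and its $\mathcal S$-children, would diverge logarithmically in the depth of the stopping tree; it is the mean-zero property of $\mathcal C_\alpha$ together with the $L^2$ orthogonality in $(y,t)$-space that collapse the accumulation and make the sparse \emph{square} function $A_{\mathcal S}f$, rather than a sparse linear average, the correct dominating quantity.
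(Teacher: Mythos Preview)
The paper does not actually prove this lemma; it only remarks that it is ``a variant of the argument in \cite{L_A2}.'' That argument, transported to $G_\alpha$, builds the stopping family using the weak-type $(1,1)$ inequality for $G_\alpha$ itself (together with that for $M$): the exceptional set in $Q_0$ is the set where $Mf$ \emph{or} $G_\alpha(f\mathbf 1_{3Q_0})$ exceeds $C\langle|f|\rangle_{3Q_0}$, and both weak-type bounds are needed to force $|E|\le\tfrac12|Q_0|$. On a maximal dyadic $Q'\subset E$, the portion of $G_\alpha f(x)^2$ coming from scales $t\gtrsim\ell(Q')$ is dominated by the value of $G_\alpha(f\mathbf 1_{3Q_0})$ at a nearby point of $Q_0\setminus E$, hence by $C^2\langle|f|\rangle_{3Q_0}^2$; the small scales see only $f\mathbf 1_{3Q'}$ and are handled by recursion. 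No orthogonality is used.

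Your stopping rule uses only averages of $|f|$, and you then try to recover the per-cube quadratic estimate via cancellation and ``$L^2$ orthogonality.'' This step does not go through. Subtracting $\langle f\rangle_{3Q}$ does nothing to the pointwise size---one still only has $A_\alpha f(y,t)\lesssim\langle|f|\rangle_{3Q}$ on each Whitney box between $Q$ and its $\mathcal S$-children---and the sum $\sum_{Q''}\int_{W_{Q''}}A_\alpha f(y,t)^2\,dy\,dt/t^{n+1}$ is a sum of nonnegative terms at a fixed $x$, so there is no cancellation for any integrated $L^2$ statement to exploit. In fact your displayed estimate is false with this stopping rule: take $f=\sum_{k=0}^{N}f_k$ with each $f_k$ a mean-zero bump supported in the dyadic annulus at scale $2^{-k}$ about a fixed point $x$, normalised so that $\langle|f|\rangle_{3Q^{(j)}}\sim1$ for every $j\le N$ (hence no stopping is triggered along the chain of cubes through $x$) while $A_\alpha f(y,t)\gtrsim1$ for $(y,t)\in\Gamma(x)$ at each scale $2^{-j}$. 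Then the left side of your display is $\gtrsim N$ but the right side is $\sim1$. The missing ingredient is precisely the weak $(1,1)$ bound for $G_\alpha$, fed into the choice of stopping cubes as in \cite{L_A2} (or, equivalently, Lerner's local mean oscillation formula applied to $(G_\alpha f)^2$ together with the oscillation bound $\omega_\lambda((G_\alpha f)^2;P)\lesssim\langle|f|\rangle_{3P}^2$, which again rests on that weak-type inequality).
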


Clearly, we need only consider the weighted bounds for a single sparse operator.  There is then nothing
special about the $ \ell  ^2 $ sum used to define the operator, hence we define $ \ell ^{r}$ variants as follows.
\begin{equation}\label{eq:t}
\Bigl(A_{\mathcal S}^rf \Bigr) ^{r}=\sum_{Q\in\mathcal S} \langle f\rangle_Q^r \mathbf 1_Q.
\end{equation}
We have the following more general estimate.

\begin{Theorem}\label{thm:p}
Let $A_{\mathcal S}^r$ be defined as in \eqref{eq:t} and $p >r$, then
\[
\|A_{\mathcal S}^r(\cdot\sigma)\|_{L^p(\sigma)\rightarrow L^p(w)}
\le C [w,\sigma]_{A_p}^{\frac 1p}\bigl([w]_{A_\infty}^{\frac 1r-\frac 1p} + [\sigma ] _{A _{\infty }}  ^{\frac 1p}\bigr)
\]
where the constant $C$ is independent of $w$ and $\sigma$.
\end{Theorem}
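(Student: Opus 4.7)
The plan is to combine $L^{p/r}(w)$-duality with the parallel stopping cube technology of \cite{LSSU,Hytonen}. Since $p>r$, one has $\|A_{\mathcal{S}}^r(f\sigma)\|_{L^p(w)}^r = \|\sum_{Q\in\mathcal S} \langle f\sigma\rangle_Q^r \mathbf 1_Q\|_{L^{p/r}(w)}$, so by $L^{p/r}$--$L^{(p/r)'}$ duality the problem reduces to the bilinear estimate
\[
\Lambda(f,g) := \sum_{Q\in\mathcal S}\langle f\sigma\rangle_Q^r \langle gw\rangle_Q|Q|
\lesssim [w,\sigma]_{A_p}^{r/p}\bigl([w]_{A_\infty}^{(p-r)/p}+[\sigma]_{A_\infty}^{r/p}\bigr)\|f\|_{L^p(\sigma)}^r\|g\|_{L^{(p/r)'}(w)}
\]
for non-negative $f, g$. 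Taking $r$-th roots and using $(a+b)^{1/r}\le a^{1/r}+b^{1/r}$ valid for $r\ge 1$, together with the identity $(p-r)/(pr) = 1/r - 1/p$, recovers the stated bound.

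To control $\Lambda$, I would introduce two parallel stopping trees in the manner of Lacey--Sawyer--Shen--Uriarte-Tuero: $\mathcal F$ for the pair $(f,\sigma)$, with stopping values $a_F=\sigma(F)^{-1}\int_F f\sigma$, and $\mathcal G$ for $(g,w)$, with $b_G=w(G)^{-1}\int_G gw$, each with the customary doubling constant $4$. For $Q\in\mathcal S$, writing $F_Q$ and $G_Q$ for the minimal stopping ancestors, this yields the freezings $\langle f\sigma\rangle_Q\le 4a_{F_Q}\langle\sigma\rangle_Q$ and $\langle gw\rangle_Q\le 4b_{G_Q}\langle w\rangle_Q$, along with the universal Carleson bounds $\sum_F a_F^p\sigma(F)\lesssim\|f\|_{L^p(\sigma)}^p$ and $\sum_G b_G^{(p/r)'}w(G)\lesssim\|g\|_{L^{(p/r)'}(w)}^{(p/r)'}$. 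The $A_p$-pairing assumption enters through the algebraic split
\[
\langle\sigma\rangle_Q^r\langle w\rangle_Q
=\bigl(\langle\sigma\rangle_Q^{p-1}\langle w\rangle_Q\bigr)^{r/p}\langle\sigma\rangle_Q^{r/p}\langle w\rangle_Q^{(p-r)/p}
\le [w,\sigma]_{A_p}^{r/p}\langle\sigma\rangle_Q^{r/p}\langle w\rangle_Q^{(p-r)/p},
\]
so that $\langle\sigma\rangle_Q^r\langle w\rangle_Q|Q|\le [w,\sigma]_{A_p}^{r/p}\sigma(Q)^{r/p}w(Q)^{(p-r)/p}$.

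With these tools in place, $\Lambda$ is decomposed as $\Lambda^{(\mathrm I)}+\Lambda^{(\mathrm{II})}$ according to whether $F_Q\subseteq G_Q$ (Case~I) or $G_Q\subsetneq F_Q$ (Case~II). In Case~I, after regrouping by $G\in\mathcal G$ and principal $F\subseteq G$ and applying H\"older in $Q$ with exponents $p/r$ and $p/(p-r)$, the $g$-side is closed via the $A_\infty$-Carleson inequality $\sum_{Q\in\mathcal S,\,Q\subseteq G}w(Q)\lesssim[w]_{A_\infty}w(G)$ combined with the stopping Carleson bound for $g$, producing the factor $[w]_{A_\infty}^{(p-r)/p}$; the $f$-side is absorbed by routing through the $\sigma$-sparseness of the principal tree $\mathcal F$, so that no $[\sigma]_{A_\infty}$ factor is incurred on this side. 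Case~II is the mirror image and delivers $[\sigma]_{A_\infty}^{r/p}$. The principal obstacle is precisely this case analysis: a careless H\"older on $\Lambda$ produces the undesired \emph{product} $[w]_{A_\infty}^{(p-r)/p}[\sigma]_{A_\infty}^{r/p}$, which is strictly weaker than the sum in the theorem (indeed $a^\alpha b^\beta$ is not in general controlled by $a^\alpha+b^\beta$). Avoiding this product requires carefully routing the $\sigma$- and $w$-sums through the principal trees $\mathcal F$ and $\mathcal G$ whose sparseness is tailored to the matching weight, so that only one $A_\infty$ constant is paid in each case.
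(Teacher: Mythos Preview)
Your overall architecture—duality, parallel stopping cubes, and the split according to $F_Q\subseteq G_Q$ versus $G_Q\subsetneq F_Q$—matches the paper's, and you correctly diagnose that a careless H\"older produces the unwanted product $[w]_{A_\infty}^{(p-r)/p}[\sigma]_{A_\infty}^{r/p}$. The gap is in your proposed cure. After the one-sided split $\langle\sigma\rangle_Q^r\langle w\rangle_Q|Q|\le [w,\sigma]_{A_p}^{r/p}\sigma(Q)^{r/p}w(Q)^{(p-r)/p}$ and H\"older in $Q$, Case~I leaves you with $\sum_{Q:\,\pi_{\mathcal F}(Q)=F}\sigma(Q)$. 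This sum ranges over \emph{all} $Q\in\mathcal S$ in the corona of $F$, not over the stopping cubes of $\mathcal F$; the $\sigma$-sparseness of the principal tree says only that $\sum_{F'\in\mathcal F,\,F'\subseteq F}\sigma(F')\lesssim\sigma(F)$, which is a different statement. Controlling $\sum_{Q\in\mathcal S,\,Q\subseteq F}\sigma(Q)$ by $\sigma(F)$ genuinely costs $[\sigma]_{A_\infty}$, so your Case~I still pays both constants, and by symmetry so does Case~II.

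The paper sidesteps this by first pigeonholing on $\langle\sigma\rangle_Q^{p-1}\langle w\rangle_Q\simeq 2^a$ \emph{before} building the stopping trees. This two-sided equivalence—unavailable from your one-sided inequality—lets one rewrite $\langle\sigma\rangle_Q^r w(Q)$ as $2^a\langle\sigma\rangle_Q^{r+1-p}|Q|$ (for $r<p\le r+1$) or as $2^{ar/(p-1)}w(Q)^{1-r/(p-1)}|Q|^{r/(p-1)}$ (for $p\ge r+1$). The appearance of $|Q|$ is the point: it can be replaced by $|E_Q|$ via the \emph{Lebesgue} sparseness of $\mathcal S$, and the resulting sum is handled by a maximal-function or H\"older argument involving only $[w]_{A_\infty}$. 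For the other piece (your Case~II), the paper does not use a mirror argument but instead invokes the testing bound $\|A^1_{\mathcal S(F)}(\sigma\mathbf 1_F)\|_{L^p(w)}\lesssim\bigl([w,\sigma]_{A_p}[\sigma]_{A_\infty}\sigma(F)\bigr)^{1/p}$ from \cite{HL}. Without the pigeonhole (or some equivalent device that allows two-sided conversion between $\sigma$- and $w$-averages at the scale of $Q$), your scheme does not decouple the two $A_\infty$ constants.
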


It is a useful remark that the estimate above can be made slightly more precise, in that the supremums defining
the two-weight $ A_p$ and $ A _{\infty }$ constants need only be taken over the collection of cubes $ \mathcal S$.
To be precise, $ [w, \sigma ] _{A_p}$ above can be replaced by
\begin{equation} \label{e:reduce}
[w, \sigma ] _{A_p, \mathcal S} :=
\sup _{Q\in \mathcal S} \langle \sigma  \rangle_Q ^{p-1} \langle w \rangle_Q,
\end{equation}
and similarly for $ [w] _{A_ \infty }$ can be replaced by $ [ w] _{A _{\infty }, \mathcal S}$, which has a similar definition.

\begin{proof}
Use duality to eliminate the $ r$th root. Since $ p>r$, it suffices to prove
\begin{equation}\label{e:La}
\langle (A ^{r} _{\mathcal S} f \sigma   ) ^{r}, g w \rangle  \lesssim
N ,
\end{equation}
under these conditions.
\begin{enumerate}
\item  $ N ^{1/r}$ satisfies the bounds of the Theorem.
\item The functions $ f$ and $ g$ are normalized so that
$ \lVert f\rVert _{L ^{p} (\sigma )} =1$ and $ \lVert g\rVert _{L ^{ q} (w)} =1$,
where $ q = (p/r)' = \frac p {p-r}$.
\item The sparse collection $ \mathcal S$ satisfies
\begin{equation}\label{e:Lb}
2 ^{a-1} < \langle \sigma  \rangle_ Q ^{p-1} \langle w \rangle_Q  \leq 2 ^{a},
\end{equation}
for some integer $ a$.   (And then one can sum over $ a$.)
\item All cubes $ Q\in \mathcal S$ are contained in a  root cube $ Q ^{0}$.
\end{enumerate}

The parallel corona is used to decompose the inner product in \eqref{e:La}.
Now we can define the principal cubes $\mathcal F$ for $(f,\sigma)$ and $\mathcal G$ for $(g, w)$. Namely,
\begin{eqnarray*}
\mathcal F&:=& \bigcup_{k=0}^\infty \mathcal F_k, \quad \mathcal F_0:=  \{\textup{maximal cubes in }\mathcal S\}\\
\mathcal F_{k+1}&:=& \bigcup_{F\in \mathcal F_k}\ch_{\mathcal F}(F),\quad \ch_{\mathcal F}(F):= \{ Q\subsetneq F\, \textup{maximal \,s.t.} \langle f\rangle_Q^\sigma>2\langle f\rangle_F^\sigma \},
\end{eqnarray*}
and analogously for $\mathcal G$. We also denote by  $\pi_{\mathcal F} (Q)$ the minimal cube in $\mathcal F$ which contains $Q$, and
$\pi (Q)=(F, G)$ if $\pi_{\mathcal F}(Q)=F$ and $\pi_{\mathcal G}(Q)=G$. With this definition, it is easy to check that for any $1<p<\infty$,
\begin{equation} \label{e:stop<}
\sum_{F\in \mathcal F} (\langle f\rangle_F^\sigma)^p \sigma(F) \lesssim \|f\|_{L^p(\sigma)}^p
\end{equation}
and a similar inequality holds for $g$.

In terms of the principal cubes, $ \langle (A ^{r} _{\mathcal S} f \sigma   ) ^{r}, g w \rangle  $ is
less than  $2^{r+1}$ times  the sum $ I + I\!I$, where
\begin{align}  \label{e:Idef}
I &:= \sum_{F\in \mathcal F}(\langle f\rangle_F^\sigma)^r
\underbrace{\sum_{\substack{G\in \mathcal G\\ \pi _{\mathcal F}(G)= F}}  \langle g\rangle_G^w
\sum_{\substack{Q\in \mathcal S\\ \pi(Q)=(F, G)}}  \langle\sigma\rangle_Q^r w(Q)}
_{:= I (F)},
\\ \label{e:IIdef}
I\!I := & \sum_{G\in \mathcal G}\langle g\rangle_G^w
\underbrace{
\sum_{\substack{F\in \mathcal F\\ \pi _{\mathcal G}(F)=G }} (\langle f\rangle_F^\sigma)^r
\sum_{\substack{Q\in \mathcal S\\ \pi(Q)=(F, G)}}   \langle\sigma\rangle_Q^r w(Q) } _{:= I\!I (G)} .
\end{align}
In view of \eqref{e:stop<},  the bound for $ I (F)$ we need is of the form below.
\begin{equation}\label{e:Lc}
I (F)
 \lesssim N _{I}  \sigma (F) ^{\frac rp}
 \Biggl[ \sum_{\substack{G\in \mathcal G\\ \pi _{\mathcal F}(G)= F}}  (\langle g\rangle_G^w) ^{q} w (G) \Biggr] ^{\frac 1q} ,
 \qquad F\in \mathcal F.
\end{equation}
Recall that  $ \frac 1q = \frac {p-r}p$, so that $ \frac 1q+ \frac rp=1$.
Indeed, with this bound, a straight forward application of H\"older's inequality, with \eqref{e:stop<}
completes a proof of \eqref{e:La}.
We then conclude that $ I  \lesssim N_I  $.
For $ I\!I (G)$, the bound is of the form below.
\begin{equation}\label{e:Ld}
I\!I (G)\lesssim N _{I\!I}  w (G) ^{\frac 1q}
 \Biggl[ \sum_{\substack{F\in \mathcal F\\ \pi _{\mathcal G}(F)= G}}  (\langle f\rangle_F^ \sigma ) ^{r} \sigma  (F) \Biggr]  ^{\frac rp}, \qquad G\in \mathcal G.
\end{equation}

\medskip

Let us now bound $ N _{I}$, for all $r<p<\infty$.
Observe that  
\begin{eqnarray*}
I (F)&\lesssim&
\sum_{\substack{G\in \mathcal G\\ \pi_{\mathcal F}(G)=F}}\sum_{\substack{Q\in \mathcal S\\ \pi(Q)=(F, G)}}\int_Q\Bigl(\sup_{\substack{G'\in \mathcal G \\ \pi _{\mathcal F} (G')=F }}\langle g\rangle_{G'}^w\mathbf 1_{G'}\Bigr)
\langle\sigma\rangle_Q^r \mathbf 1_Q \dw\\
&\le& \int_F\Bigl(\sup_{\substack{G'\in \mathcal G \\ \pi _{\mathcal F} (G')=F }}\langle g\rangle_{G'}^w\mathbf 1_{G'}\Bigr)
A^r_{\mathcal S(F)}(\sigma \mathbf 1_F)^r \dw\\
&\le&
\Big\|\sup_{G'\in \mathcal G(F)}\langle g\rangle_{G'}^w\mathbf 1_{G'}\Big\|_{L^{q}(w)}
\cdot
\|A^r_{\mathcal S(F)}(\sigma \mathbf 1_F)  ^{r}\|_{L^{p/r}(w)}.
\end{eqnarray*}
Now, the first term on the right, by construction of the principal cubes is no more than
\begin{equation*}
 \Biggl[ \sum_{\substack{G\in \mathcal G\\ \pi _{\mathcal F}(G)= F}}  (\langle g\rangle_G^w) ^{q} w (G) \Biggr] ^{\frac 1q}
\end{equation*}
as required in \eqref{e:Lc}.  The second term, the notation is $ \mathcal S (F) = \{Q\in \mathcal S \;:\; \pi _{\mathcal F}(Q)=F\} $.
Below, we dominate $ \ell ^{r}$-norms by $ \ell ^{1}$, and appeal to  \cite[Prop. 5.3]{HL} to see that
\begin{align*}
\|A^r_{\mathcal S(F)}(\sigma \mathbf 1_F)  ^{r}\|_{L^{p/r}(w)}
& \leq \|A^1_{\mathcal S(F)}(\sigma \mathbf 1_F)  \|_{L^{p}(w)} ^{r}
 \lesssim \bigl[ [w, \sigma ] _{A_p,\mathcal S(F)} [\sigma ] _{A _{\infty },\mathcal S(F)} \sigma (F) \bigr] ^{\frac rp}.
\end{align*}
Our conclusion is that
\begin{equation}\label{e:NI}
N _{I} (F) \lesssim 2 ^{a \frac rp}    [\sigma ] _{A _{\infty }}  ^{\frac rp} .
\end{equation}

\medskip
Now we turn to the analysis of $ I\!I (G)$. This is the more delicate case, that breaks into the two subcases of
$ r< p \leq  r+1$, and $ r+1\leq p$, though the resulting inequality is the same in both cases.
We treat the case of $ r<p< r+1$ first.
The first step is to again appeal to \eqref{e:Lb} to write
\begin{align*}
 \langle\sigma\rangle_Q^r w(Q)
& \simeq  \langle\sigma\rangle_Q^r \langle w \rangle_Q \cdot \lvert  Q\rvert
\\
& \simeq 2 ^{a} \langle\sigma\rangle_Q^ {r+1-p} \lvert  Q\rvert.
\end{align*}
Indeed, in the last line, we can replace $ \lvert  Q\rvert $ by $ \lvert  E (Q)\rvert $, the execptional set
associated to $ Q$. The sets $ E (Q)$ are disjoint in $ Q$, whence
\begin{align*}
\sum_{\substack{Q\in \mathcal S \\ \pi(Q)=(F,G)}} \langle\sigma\rangle_Q^r w(Q)
& \lesssim 2^a \int _{F} M (\sigma \mathbf 1_{F}) ^{r+1-p} \; dx .
\end{align*}
Now, recall that on probability spaces that $ L ^{t}$ norms increase in $ t$. This has an extension to
Lorentz spaces, from which we conclude that
\begin{align*}
\Biggl[ \frac 1 {\lvert  F\rvert } \int _{F} M (\sigma \mathbf 1_{F}) ^{r+1-p} \; dx \Biggr] ^{\frac 1 {r+1-p}}
& \leq
\lVert  \mathbf 1_{F} M (\sigma \mathbf 1_{F}) \rVert _{L ^{1, \infty }  ( F, \frac {dx} {\lvert  F\rvert })}
\\
& \lesssim \langle \sigma  \rangle_F.
\end{align*}
This just depends upon the maximal function bound.   Simplifying, we have the bound
\begin{align*}
\sum_{\substack{Q\in \mathcal S \\ \pi(Q)=(F,G)}} \langle\sigma\rangle_Q^r w(Q)
& \lesssim 2^a \sigma (F) ^{r+1-p } \lvert  F\rvert ^{p-r}
\\
& \lesssim  2^a 2 ^{a \frac {r-p}p} \sigma (F) ^{r+1-p} \bigl[ \sigma (F) ^{p-1}w (F) \bigr] ^{\frac 1q }.
\end{align*}
Here, we must note that the power on $ \sigma (F)$ is  ($ \frac 1q = \frac  {p-r}p$)
\begin{align*}
r+1-p  + (p-1) \frac {p-r} p = \frac rp .
\end{align*}
Indeed, this is easy to see by multiplying both sides above  by $ p$.

It follows that
\begin{align*}
I\!I (G)  & \lesssim 2 ^{a \frac rp }
  \sum_{\substack{F\in \mathcal F\\ \pi _{\mathcal G} (F)=G}}
(\langle f \rangle ^{\sigma } _{F} ) ^ r   \sigma (F)  ^{ \frac rp }  w (F)  ^{\frac 1q}
\\
&  \lesssim  2 ^{a \frac rp }    [w] _{A _{\infty   }} ^{\frac  {p-r}p} w(G)^{\frac 1 q}\Biggl[ \sum_{\substack{F\in \mathcal F\\ \pi _{\mathcal G}(F)= G}}  (\langle f\rangle_F^ \sigma ) ^{p} \sigma  (F) \Biggr]  ^{\frac rp},
\qquad r < p < r+1.
\end{align*}
This just depends upon an application of H\"older's inequality, and an appeal to the $ A _{\infty }$ constant of $ w$
to bound the sum over $ F$ of $ w (F)$.
We conclude the bound below, which is just as the Theorem claims.
\begin{equation} \label{e:II1}
N _{I\!I} \lesssim  2 ^{a \frac rp }  [w] _{A _{\infty   }} ^{\frac  {p-r}p},
\qquad r < p < r+1.
\end{equation}

\medskip

The last case is to estimate $ N _{I\!I}$ in the case of $ r+1\leq p < \infty $.
We begin by eliminating the $ \sigma (Q) ^{r} $ in the term on the right below: By \eqref{e:Lb}
\begin{equation*}
 \langle\sigma\rangle_Q^r w(Q) \simeq
2^{\frac {ar}{p-1}}w(Q)^{1-\frac r{p-1}}|Q|^{\frac r{p-1}}.
\end{equation*}
The exponents above are in H\"older's duality, thus
\begin{align*}
\sum_{Q \;:\; \pi (Q) = (F,G)}
 \langle\sigma\rangle_Q^r w(Q)
 & \lesssim 2^{\frac {ar}{p-1}}  \lvert  F\rvert ^{\frac r{p-1}}
 \Biggl[   \sum_{Q \;:\; \pi (Q) = (F,G)}   w (Q)\Biggr] ^{1-\frac r{p-1}}
 \\
 & \lesssim 2 ^{a\frac rp} \sigma (F) ^{\frac rp}
  \Biggl[   \sum_{Q \;:\; \pi (Q) = (F,G)}   w (Q)\Biggr] ^{1-\frac r{p-1}}  w (F) ^{\frac r {p (p-1)}}.
\end{align*}
In the second line, appeal to  \eqref{e:Lb} again, converting  $ \lvert  F\rvert $ in the first  line into a geometric mean
of $ \sigma (F)$ and $ w (F)$.

Therefore, from the definition of $ I\!I (G)$, we have
\begin{align*}
I\!I (G) & \lesssim 2 ^{a\frac rp}
\sum_{F \;:\; \pi _{\mathcal G} (F) = G}
(\langle f \rangle_F ^{\sigma }) ^{r} \sigma (F) ^{\frac rp}
  \Biggl[   \sum_{Q \;:\; \pi (Q) = (F,G)}   w (Q)\Biggr] ^{1-\frac r{p-1}}  w (F) ^{\frac r {p (p-1)}}
  \\
  & \lesssim
  2 ^{a\frac rp}   [w] _{A _{\infty }} ^{\frac 1q}
  \Biggl[   \sum_{F \;:\; \pi _{\mathcal G} (F) = G}
(\langle f \rangle_F ^{\sigma }) ^{p} \sigma (F)  \Biggr] ^{\frac rp} w (G) ^{\frac 1q}
\end{align*}
after an application of the trilinear form of H\"older's inequality, and the use of the $ A _{\infty }$ property of $ w$.
We conclude that
\begin{equation}\label{e:II2}
N _{I\!I}  \lesssim  2 ^{a\frac rp}   [w] _{A _{\infty }} ^{\frac {p-r} p} , \qquad r+1\leq p < \infty .
\end{equation}
The proof follows by combining the estimates \eqref{e:Lc}, \eqref{e:NI},  \eqref{e:Ld}, \eqref{e:II1} and \eqref{e:II2}.
\end{proof}

The case of $1<p\le r$ is quite simple, we have the following estimate
\begin{Theorem}\label{thm:r}
Let $1<p\le r$. There holds
\[
\|A_{\mathcal S}^r(\cdot\sigma)\|_{L^p(\sigma)\rightarrow L^p(w)}\le C [w,\sigma]_{A_p}^{\frac 1p} [\sigma]_{A_\infty}^{\frac 1 p},
\]
where the constant $C$ is independent of the weights $w$ and $\sigma$.
\end{Theorem}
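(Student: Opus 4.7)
The plan is to exploit the concavity gain available when $p\le r$: for $\ell^{p/r}$-norms we have subadditivity of $t\mapsto t^{p/r}$, which collapses the $ \ell ^{r}$-structure of $ A ^{r} _{\mathcal S}$ and reduces matters to a linear Carleson embedding.

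First, I would note that since $p/r \le 1$, the pointwise inequality $ \bigl(\sum_i a_i\bigr)^{p/r}\le \sum_i a_i^{p/r}$ gives
\begin{equation*}
\|A_{\mathcal S}^{r}(f\sigma)\|_{L^p(w)}^{p}
=\int\Bigl(\sum_{Q\in\mathcal S}\langle f\sigma\rangle_Q^{r}\mathbf 1_Q\Bigr)^{p/r}dw
\le\sum_{Q\in\mathcal S}\langle f\sigma\rangle_Q^{p}\,w(Q).
\end{equation*}
Writing $\langle f\sigma\rangle_Q = \langle f\rangle_Q^{\sigma}\langle\sigma\rangle_Q$ and using the two-weight $A_p$ constant in the form $\langle\sigma\rangle_Q^{p}w(Q)=\langle\sigma\rangle_Q^{p-1}\langle w\rangle_Q\cdot\sigma(Q)\le[w,\sigma]_{A_p}\sigma(Q)$, the right side is controlled by
\begin{equation*}
[w,\sigma]_{A_p}\sum_{Q\in\mathcal S}(\langle f\rangle_Q^{\sigma})^{p}\sigma(Q).
\end{equation*}

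The remaining step is a standard Carleson embedding into $L^p(\sigma)$. I would verify the Carleson packing condition
\begin{equation*}
\sum_{\substack{Q\in\mathcal S\\ Q\subseteq R}}\sigma(Q)\lesssim [\sigma]_{A_\infty}\,\sigma(R),\qquad R\in\mathcal S,
\end{equation*}
by means of the disjoint exceptional sets $E(Q)=Q\setminus\bigcup_{Q'\in\mathcal S,\,Q'\subsetneq Q}Q'$, which satisfy $|E(Q)|\ge\tfrac12|Q|$ by sparsity. For each such $Q\subseteq R$ one has $\sigma(Q)\le 2\langle\sigma\rangle_Q|E(Q)|\le 2\int_{E(Q)}M(\sigma\mathbf 1_R)\,dx$; summing and invoking the disjointness of the $E(Q)$ together with the definition of $[\sigma]_{A_\infty}$ yields the claim. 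Carleson's embedding lemma then delivers
\begin{equation*}
\sum_{Q\in\mathcal S}(\langle f\rangle_Q^{\sigma})^{p}\sigma(Q)\lesssim [\sigma]_{A_\infty}\,\|f\|_{L^p(\sigma)}^{p},
\end{equation*}
after which combining the estimates and taking $p$-th roots gives the stated bound $[w,\sigma]_{A_p}^{1/p}[\sigma]_{A_\infty}^{1/p}$.

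There is no real obstacle here: compared with Theorem~\ref{thm:p}, the case $p\le r$ bypasses the parallel corona and the delicate case analysis near $p\approx r+1$ entirely, because the $ \ell ^{p/r}\subseteq \ell^{1}$ embedding does the work of the duality argument for free. The only point that needs care is to make sure the Carleson packing constant depends only on $[\sigma]_{A_\infty}$ (and not on any $A_\infty$ constant of $w$), which is immediate from the sparsity-based argument above.
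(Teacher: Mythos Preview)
Your proof is correct and follows essentially the same route as the paper: both collapse the $\ell^r$-structure via the subadditivity $(\sum a_i)^{p/r}\le\sum a_i^{p/r}$, then absorb $\langle\sigma\rangle_Q^{p}w(Q)$ using $[w,\sigma]_{A_p}$, and finish with an $A_\infty$-based Carleson-type estimate. The only cosmetic difference is that the paper writes out the principal cubes $\mathcal F$ for $f$ explicitly (using $\sum_{Q:\pi(Q)=F}\sigma(Q)\lesssim[\sigma]_{A_\infty}\sigma(F)$ together with \eqref{e:stop<}), whereas you package the same step as the Carleson embedding lemma with the sparsity/$A_\infty$ packing condition verified directly.
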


\begin{proof}
We only need principle cubes  for the function $ f$, and we use the same definition and notation from the previous proof.
Since $ \ell ^{p}$ norms are larger than $ \ell ^{r}$ norms, we have
\begin{eqnarray*}
\|A_{\mathcal S}^r(f\sigma) \|_{L^p(w)} ^{p}
&=& \bigg\|\bigg(\sum_{F\in \mathcal F}\sum_{\substack{Q\in \mathcal S\\ \pi(Q)=F}}  (\langle f\rangle_Q^\sigma) ^r\langle\sigma\rangle_Q^r \mathbf 1_Q\bigg)^{\frac 1 r}\bigg\|_{L^p(w)} ^{p}\\
& \lesssim  &  \sum_{F\in\mathcal F} (\langle f\rangle_F^\sigma)^p
\sum_{\substack{Q\in \mathcal S\\ \pi(Q)=F}}  \langle \sigma  \rangle_Q ^{p} w (Q)
\\
& \lesssim &  [w,\sigma]_{A_p}
\sum_{F\in\mathcal F} (\langle f\rangle_F^\sigma)^p
\sum_{\substack{Q\in \mathcal S\\ \pi(Q)=F}}  \sigma  (Q)
\\
&\lesssim& [w,\sigma]_{A_p} [\sigma]_{A_\infty} \|f\|_{L^p(\sigma)} ^{p}
\end{eqnarray*}
where we have used the $ A_p$ and $ A _{\infty }$ properties in a straight forward way.
\end{proof}
Now with Theorems~\ref{thm:p} and \ref{thm:r}, Theorem~\ref{thm:m1} follows immediately, by setting $ r=2$.

\section{Proof of Theorem~\ref{thm:m2}}
In this section, we shall give a proof for Theorem~\ref{thm:m2}. In the proof, we will have
recourse to this Carleson embedding inequality.  Proved in \cite[Theorem 4.2]{TV}, it has a very short proof given in \cite[\S4]{LS}.

\begin{Lemma}\label{lm:ls}
Let $\epsilon$ be a monotonic increasing function on $(1,\infty)$ such that $\int_1^\infty \frac{dt}{\epsilon(t) t}<\infty$. Then for all $1<p<\infty$, we have
\[
\sum_{Q\in \mathcal S} (\langle f\rangle_Q^\sigma)^p \frac{\sigma(Q) }{\rho_{\sigma, \epsilon}(Q) } \lesssim \|f\|_{L^p(\sigma)}^p.
\]
\end{Lemma}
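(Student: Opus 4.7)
The plan is to reduce the statement to the standard weighted Carleson embedding theorem by a dyadic slicing of the collection $\mathcal S$ according to the magnitude of $\rho_\sigma(Q)$, and then to use the integrability hypothesis on $\epsilon$ to sum the resulting geometric-style series.

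First, partition $\mathcal S$ into level sets
\[
\mathcal S_k:=\{Q\in\mathcal S\,:\,2^{k}\le \rho_\sigma(Q)<2^{k+1}\},\qquad k\ge 0,
\]
which covers $\mathcal S$ since $\rho_\sigma(Q)\ge 1$ always. On each slice the weight $\rho_{\sigma,\epsilon}(Q)=\rho_\sigma(Q)\epsilon(\rho_\sigma(Q))$ is essentially constant, bounded below by $2^{k}\epsilon(2^{k})$ because $\epsilon$ is monotone increasing. Thus it is enough to prove
\[
\sum_{Q\in\mathcal S_k}(\langle f\rangle_Q^\sigma)^p\,\sigma(Q)\lesssim 2^{k}\,\|f\|_{L^p(\sigma)}^p,
\]
and then divide by $2^{k}\epsilon(2^{k})$ and sum, using
\[
\sum_{k\ge 0}\frac{1}{\epsilon(2^{k})}\lesssim\int_1^\infty\frac{dt}{t\,\epsilon(t)}<\infty.
\]

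The main substantive step is to establish that $\{\sigma(Q)\}_{Q\in\mathcal S_k}$ is a $\sigma$-Carleson sequence with constant $O(2^{k})$. Fix any cube $R$ and let $\{Q_i\}$ be the maximal cubes of $\mathcal S_k$ contained in $R$. Using sparseness of $\mathcal S$ and its associated disjoint subsets $E(Q)\subset Q$ with $|E(Q)|\ge \tfrac 12|Q|$, I would compute
\[
\sum_{\substack{Q\in\mathcal S\\Q\subseteq Q_i}}\sigma(Q)\lesssim \sum_{\substack{Q\in\mathcal S\\Q\subseteq Q_i}}\langle\sigma\rangle_Q|E(Q)|\le \int_{Q_i}M(\sigma\mathbf 1_{Q_i})\,dx=\rho_\sigma(Q_i)\sigma(Q_i)\lesssim 2^{k}\sigma(Q_i),
\]
and then sum over $i$ and use disjointness of the $Q_i$ in $R$ to get $\sum_{Q\in\mathcal S_k,\,Q\subseteq R}\sigma(Q)\lesssim 2^{k}\sigma(R)$.

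With the Carleson testing condition in hand, the classical weighted dyadic Carleson embedding theorem (a one-line consequence of the weighted maximal inequality and integration of $(\langle f\rangle_Q^\sigma)^p$ against the level-set distribution) yields
\[
\sum_{Q\in\mathcal S_k}(\langle f\rangle_Q^\sigma)^p\sigma(Q)\lesssim 2^{k}\|f\|_{L^p(\sigma)}^p.
\]
Combining with the pointwise bound $\rho_{\sigma,\epsilon}(Q)\ge 2^{k}\epsilon(2^{k})$ on $\mathcal S_k$ and summing over $k$ completes the proof. The only mildly technical point is the Carleson testing step; everything else is bookkeeping, and I do not anticipate any genuine obstacle.
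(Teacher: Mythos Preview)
Your argument is correct. The paper does not actually prove this lemma; it only cites \cite[Theorem 4.2]{TV} and the short proof in \cite[\S4]{LS}. Your approach---slicing $\mathcal S$ according to the dyadic level of $\rho_\sigma(Q)$, verifying a $\sigma$-Carleson condition on each slice with constant $O(2^k)$ via sparseness and the definition of $\rho_\sigma$, invoking the weighted Carleson embedding theorem, and summing $\sum_k \epsilon(2^k)^{-1}$ using the integral hypothesis---is exactly the argument in \cite{LS}, so you have reconstructed the intended proof. The only cosmetic point is that $\epsilon$ is defined on $(1,\infty)$, so the $k=0$ slice should be handled by a trivial adjustment (e.g.\ noting $\rho_\sigma(Q)\ge 1$ and absorbing the single term, or shifting the slicing to start at a fixed $k_0$); this is harmless.
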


Again, we reduce the problem to consider the sparse operators and we shall show the following more general estimate.
\begin{Theorem}\label{thm:ent}
Let $A_{\mathcal S}^r$ be defined as in \eqref{eq:t} and $\epsilon,\eta$ be two monotonic increasing functions on $(1,\infty)$. Then if $p> r$  and $\int_1^\infty \frac{dt}{\epsilon(t) t}<\infty$, we have the entropy bound
\begin{equation}\label{eq:ent}
\|A_{\mathcal S}^r(\cdot \sigma)\|_{L^p(\sigma)\rightarrow L^p(w)}\lesssim
\sup_Q\langle w\rangle_Q^{\frac 1p} \langle\sigma\rangle_Q^{\frac 1{p'}}\rho_{w,\epsilon}(Q)^{\frac 1r-\frac 1p}\rho_{\sigma,\epsilon}(Q)^{\frac 1p}.
\end{equation}
If $p> r$ and $\epsilon,\eta$ satisfy
\[
\int_1^\infty \frac 1{t\epsilon(t)^{1/p}}dt+ \int_1^\infty \frac 1{t\eta(t)^{\frac 1r-\frac 1p}}dt<\infty,
\]
we also have the separated entropy bound
\begin{equation}\label{eq:sep}
\|A_{\mathcal S}^r(\cdot \sigma)\|_{L^p(\sigma)\rightarrow L^p(w)}\lesssim
\sup_Q\langle w\rangle_Q^{\frac 1p} \langle\sigma\rangle_Q^{\frac 1{p'}}(\rho_{w,\eta}(Q)^{\frac 1r-\frac 1p}+\rho_{\sigma,\epsilon}(Q)^{\frac 1p}).
\end{equation}
\end{Theorem}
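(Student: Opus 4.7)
The plan is to adapt the proof of Theorem~\ref{thm:p}, substituting the global $A_\infty$ constants by their local entropy-penalized surrogates $\rho_{\sigma,\epsilon}(Q)$ and $\rho_{w,\eta}(Q)$, and using the Carleson embedding Lemma~\ref{lm:ls} in place of the raw stopping inequality \eqref{e:stop<}. After the sparse reduction and duality, one still forms the parallel coronas $\mathcal F, \mathcal G$ and splits $\langle (A^r_{\mathcal S}f\sigma)^r, gw\rangle \le 2^{r+1}(I + II)$ as in \eqref{e:Idef}--\eqref{e:IIdef}; the goal is to prove estimates of the form \eqref{e:Lc}--\eqref{e:Ld}, now with $N_I$ and $N_{II}$ controlled by the entropy bump constants.

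The key new device is a pigeonhole on the dyadic sizes of $\rho_\sigma(Q)$ and $\rho_w(Q)$. Partition $\mathcal S = \bigsqcup_{k,\ell\ge 0}\mathcal S_{k,\ell}$ with $\rho_\sigma(Q)\in [2^k,2^{k+1})$ and $\rho_w(Q)\in [2^\ell,2^{\ell+1})$ on $\mathcal S_{k,\ell}$. Restricted to a single layer, the proof of Theorem~\ref{thm:p} yields a bound with $[\sigma]_{A_\infty,\mathcal S_{k,\ell}}\lesssim 2^{k+1}$ and $[w]_{A_\infty,\mathcal S_{k,\ell}}\lesssim 2^{\ell+1}$, giving
\[
\|A^r_{\mathcal S_{k,\ell}}(\cdot\sigma)\|_{L^p(\sigma)\to L^p(w)} \lesssim [w,\sigma]_{A_p,\mathcal S_{k,\ell}}^{1/p}\bigl(2^{\ell(1/r-1/p)} + 2^{k/p}\bigr).
\]
Because $(A^r_{\mathcal S}f)^r = \sum_{k,\ell}(A^r_{\mathcal S_{k,\ell}}f)^r$ and $p>r$, Minkowski's inequality in $L^{p/r}(w)$ collapses the overall norm to the $\ell^r$-sum of layer norms. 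The entropy hypothesis converts each local $[w,\sigma]_{A_p,Q}^{1/p}$ on $\mathcal S_{k,\ell}$ into a multiple of $(2^k\epsilon(2^k))^{-1/p}(2^\ell\epsilon(2^\ell))^{-(1/r-1/p)}$, with $\eta$ replacing $\epsilon$ on the $w$-side for the separated case, leaving a double series whose convergence is governed by the stated integrability hypotheses.

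For the separated bound \eqref{eq:sep} the two bumps act independently, so the cleanest route is to prove two one-sided estimates --- one layering only by $\rho_\sigma$, replacing only $[\sigma]_{A_\infty}$ and producing the term $\rho_{\sigma,\epsilon}^{1/p}$, the other layering only by $\rho_w$, replacing only $[w]_{A_\infty}$ and producing $\rho_{w,\eta}^{1/r-1/p}$ --- and then take the maximum of the two bounds. The combined estimate \eqref{eq:ent} instead runs both replacements simultaneously with a common $\epsilon$, which is why the single but stronger hypothesis $\int_1^\infty dt/(t\epsilon(t))<\infty$ appears.

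The main obstacle will be the summation analysis in the combined case: the product $\rho_{w,\epsilon}(Q)^{1/r-1/p}\rho_{\sigma,\epsilon}(Q)^{1/p}$ must be extracted jointly from the entropy hypothesis so that the $\epsilon$-penalty from each side combines, rather than being paid twice, ensuring the resulting double series is governed by the single integrability $\int_1^\infty dt/(t\epsilon(t))<\infty$. Applying Lemma~\ref{lm:ls} on the stopping cubes $\mathcal F$, and an analogous Carleson embedding on a $w$-collection attached to $\mathcal G$, as in \cite{LS}, is the main device I expect to use to absorb these geometric series and close the estimates.
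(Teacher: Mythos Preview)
Your plan conflates two different arguments, and the one you propose for the combined bound \eqref{eq:ent} does not deliver the stated integrability hypothesis.

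\textbf{The combined bound.} The paper does \emph{not} use parallel coronas or Theorem~\ref{thm:p} for \eqref{eq:ent}. It dualizes, pigeonholes on the full bump $\langle w\rangle_Q^{1/p}\langle\sigma\rangle_Q^{1/p'}\rho_{w,\epsilon}(Q)^{1/r-1/p}\rho_{\sigma,\epsilon}(Q)^{1/p}\sim 2^a$, and for each $Q$ rewrites $\langle\sigma\rangle_Q^r w(Q)$ so that the term factors as $2^{ar}$ times $(\langle f\rangle_Q^\sigma)^r\sigma(Q)^{r/p}\rho_{\sigma,\epsilon}(Q)^{-r/p}$ and $\langle g\rangle_Q^w w(Q)^{1/q}\rho_{w,\epsilon}(Q)^{-1/q}$. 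One H\"older and two applications of Lemma~\ref{lm:ls} (to the $f$-sum and the $g$-sum over \emph{all} cubes in the layer, not stopping cubes) finish the proof. Because Lemma~\ref{lm:ls} carries the full exponent $1$ on $\epsilon$ in its hypothesis, the single condition $\int_1^\infty dt/(t\epsilon(t))<\infty$ suffices.

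Your route---pigeonhole by $(\rho_\sigma,\rho_w)\sim(2^k,2^\ell)$, apply Theorem~\ref{thm:p} to each layer, then sum---yields after Minkowski a double series comparable to
\[
\sum_{k,\ell}\epsilon(2^\ell)^{-r(1/r-1/p)}(2^k\epsilon(2^k))^{-r/p}\,2^{\ell r(1/r-1/p)}
\;+\;\sum_{k,\ell}(2^\ell\epsilon(2^\ell))^{-r(1/r-1/p)}\epsilon(2^k)^{-r/p}\,.
\]
The non-geometric factors force $\sum_\ell\epsilon(2^\ell)^{-(1-r/p)}<\infty$ and $\sum_k\epsilon(2^k)^{-r/p}<\infty$, i.e.\ integrability with exponents $(p-r)/p$ and $r/p$ on $\epsilon$, both strictly less than $1$. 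This is strictly \emph{stronger} than the hypothesis of the theorem, so the argument as you describe it does not prove \eqref{eq:ent}. Invoking Lemma~\ref{lm:ls} only on the stopping families $\mathcal F,\mathcal G$ after the corona does not rescue this: the loss already occurs when you pass through Theorem~\ref{thm:p} as a black box, because each application of that theorem spends the $A_\infty$ budget without seeing the joint $\epsilon$-penalty.

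\textbf{The separated bound.} Here the paper \emph{does} pigeonhole and apply Theorem~\ref{thm:p} layer by layer, as you suggest. But the pigeonhole is not on $(\rho_\sigma,\rho_w)$ jointly; it first splits $\mathcal S$ according to whether $\rho_\sigma(Q)^{1/p}\ge\rho_w(Q)^{1/r-1/p}$ or not. On the first piece the sum $[w]_{A_\infty}^{1/r-1/p}+[\sigma]_{A_\infty}^{1/p}$ from Theorem~\ref{thm:p} collapses to $2[\sigma]_{A_\infty}^{1/p}$, and one then layers by $\rho_\sigma\sim 2^b$ and by the $\sigma$-bump $\sim 2^a$; the dual piece is symmetric. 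Your ``two one-sided estimates'' idea cannot be read as two standalone operator bounds (those would be one-bump conditions, which are false), and if you mean to bound $I$ and $I\!I$ separately inside the corona you are no longer applying Theorem~\ref{thm:p} as a black box. The dominance split is the missing ingredient that makes the black-box application summable.
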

\begin{proof}
First, we prove \eqref{eq:ent}. Denote
\[
\lceil w,\sigma\rceil_{p,r, \epsilon}=\sup_Q\langle w\rangle_Q^{\frac 1p} \langle\sigma\rangle_Q^{\frac 1{p'}}\rho_{w,\epsilon}(Q)^{\frac 1r-\frac 1p}\rho_{\sigma,\epsilon}(Q)^{\frac 1p}.
\]
 Follow the method used in \cite{LS}, set
 \[
 \mathcal Q_a=\{Q\in\mathcal S: 2^a< \langle w\rangle_Q^{\frac 1p} \langle\sigma\rangle_Q^{\frac 1{p'}}\rho_{w,\epsilon}(Q)^{\frac 1r-\frac 1p}\rho_{\sigma,\epsilon}(Q)^{\frac 1p}\le 2^{a+1}\},
 \]
 here $2^a\le \lceil w,\sigma\rceil_{p,r,\epsilon}$. Recall that $q=(p/r)'$, by duality, we have
 \begin{eqnarray*}
 \|A_{\mathcal S}^r(f\sigma)\|_{L^p(w)}^r
&=& \sup_{\|g\|_{L^{q'}(w)}=1} \sum_{Q\in \mathcal S} \langle f\sigma\rangle_Q^r\int_Q g \dw.
 \end{eqnarray*}
 Now fix $g\in L^{q'}(w)$ with $\|g\|_{L^{q'}(w)}=1$. We have for $ Q\in \mathcal Q_a$,
 \begin{align*}
\langle f\sigma\rangle_Q^r\int_Q g \dw
&= (\langle f\rangle_Q^\sigma)^r \langle g\rangle_Q^w \langle \sigma\rangle_Q^r w(Q)
\\
& \lesssim
2 ^{ar}
 (\langle f\rangle_Q^\sigma)^r \frac{\sigma(Q)^{\frac rp}}{\rho_{\sigma, \epsilon}(Q)^{\frac rp}}
 \cdot
 \langle g\rangle_Q^w
 \frac{w(Q)^{\frac 1q}}{\rho_{w,\epsilon}(Q)^{\frac 1q}} .
\end{align*}
 The indices are set up for an application of H\"older's inequality.  The sum over $ Q$ of the terms above is
 \begin{align*}
&\sum_{a\le \log_2 \lceil w,\sigma\rceil_{p,r,\epsilon} } \sum_{Q\in \mathcal Q_a} \langle f\sigma\rangle_Q^r\int_Q g \dw
\\
& \lesssim \sum_{a\le \log_2 \lceil w,\sigma\rceil_{p,r,\epsilon} }2^{ar}\biggl[ \sum_{Q\in \mathcal Q_a} (\langle f\rangle_Q^\sigma)^p \frac{\sigma(Q) }{\rho_{\sigma, \epsilon}(Q) } \biggr]^{r/p} \biggl[\sum_{Q\in \mathcal Q_a} (\langle g\rangle_Q^w)^{q'} \frac{w(Q) }{\rho_{w,\epsilon}(Q) } \biggr]^{\frac 1{q}}
\\
 &\lesssim \lceil w, \sigma\rceil_{p,r,\epsilon}^r \|f\|_{L^p(\sigma)}^r.
\end{align*}
Lemma~\ref{lm:ls} is used in the last step, to control  the sums involving  both $ f$ and  $ g$.

 Next we consider \eqref{eq:sep}.
We decompose the collection $ \mathcal S$ into subsets  $\mathcal S_{a,b}$ and $ \mathcal S' _{a,b}$, for integers $ a, b$.
The collection $ \mathcal S _{a,b}$ consists of those $Q\in \mathcal S$ which meet three conditions,
\begin{gather*}
2^a< \langle w\rangle_Q^{\frac 1p} \langle\sigma\rangle_Q^{\frac 1{p'}}
 \rho_{\sigma,\epsilon}(Q)^{\frac 1p} \le 2^{a+1}
\\
\rho_\sigma(Q)^{\frac 1p}\ge \rho_w(Q)^{\frac 1r-\frac 1p},
\\
2^b< \rho_\sigma(Q) \le 2^{b+1}.
\end{gather*}
We also denote $\mathcal S_{a,b}'$ the sub-collection of $\mathcal S$ such that
\begin{gather*}
2^a< \langle w\rangle_Q^\frac 1p \langle\sigma\rangle_Q^{\frac 1{p'}}  \rho_{w,\eta}(Q)^{\frac 1r-\frac 1p} \le 2^{a+1},
\\
\rho_\sigma(Q)^{\frac 1p}< \rho_w(Q)^{\frac 1r-\frac 1p},
\\
2^b< \rho_w(Q) \le 2^{b+1}.
\end{gather*}
Every  $Q\in \mathcal S$ is in either $ \mathcal S _{a,b}$ or $ \mathcal S _{a,b}'$
 for some choice of $ a,b$, and are empty  if either  $ b \leq -1 $, or $ 2 ^{a} > 2 \lfloor w,\sigma\rfloor_{p,r,\epsilon,\eta}$.

The initial estimate is then as below, where it is important to note that we are using the quantification of
Theorem~\ref{thm:p}, as described in \eqref{e:reduce}.
\begin{eqnarray*}
\|A_{\mathcal S}^r(f\sigma)\|_{L^p(w)} &\le& \sum_{a,b} \|A_{\mathcal S_{a,b}}^r(f\sigma)\|_{L^p(w)}+ \|A_{\mathcal S_{a,b}'}^r(f\sigma)\|_{L^p(w)} \\
&\le& \sum_{a,b} [w, \sigma ] _{A_p, \mathcal S _{a,b}}^{\frac 1p}
\bigl([ w] _{A _{\infty }, \mathcal S _{a,b}}^{\frac 1r-\frac 1p}
+[ \sigma ] _{A _{\infty }, \mathcal S _{a,b}}^{\frac 1p}\bigr)\\
&& + \sum_{a,b} [w, \sigma ] _{A_p, \mathcal S _{a,b}'}^{\frac 1p}\bigl([ w] _{A _{\infty }, \mathcal S _{a,b}'}^{\frac 1r-\frac 1p}+[\sigma] _{A _{\infty }, \mathcal S _{a,b}'}^{\frac 1p}\bigr)\\
&:=& I+I\!I.
\end{eqnarray*}

First, we estimate $I$. By definition of $ \mathcal S _{a,b}$,  there holds
\begin{equation*}
[ w] _{A _{\infty }, \mathcal S _{a,b}}^{\frac 1r-\frac 1p}
\leq [ \sigma ] _{A _{\infty }, \mathcal S _{a,b}}^{\frac 1p},
\end{equation*}
so that
\begin{eqnarray*}
I&\lesssim& \sum_{a,b}
 [w, \sigma ] _{A_p, \mathcal S _{a,b}}^{\frac 1p} [ \sigma ] _{A _{\infty }, \mathcal S _{a,b}}^{\frac 1p}
 \\
&\lesssim& \sum_{a,b} \frac {2^{a}}{2^{b/p}\epsilon(2^b)^{1/p}} 2^{b/p}
\\
&\lesssim&   N\int_1^\infty \frac 1{t\epsilon(t)^{1/p}}dt,
\end{eqnarray*}
where $  N$ denotes the right side of \eqref{eq:sep}.
The dual term follows an analogous line of reasoning, leading to the estimate below, which completes the proof.
\[
I\!I\lesssim N \int_1^\infty \frac 1{t\eta(t)^{\frac 1r-\frac 1p}}dt.
\]
\end{proof}
It remains to consider the case $1<p\le r$. We have the following result.
\begin{Theorem}\label{thm:ent1}
Let $A_{\mathcal S}^r$ be defined as in \eqref{eq:t} and $\epsilon$ be a monotonic increasing function on $(1,\infty)$ such that $\int_1^\infty \frac 1{t\epsilon(t)^{1/p}}dt<\infty$. If $1<p\le r$, then
\[
\|A_{\mathcal S}^r(\cdot \sigma)\|_{L^p(w)\rightarrow L^p(\sigma)}\lesssim \sup_Q \langle w\rangle_Q^{\frac 1p} \langle\sigma\rangle_Q^{\frac 1{p'}}\rho_{\sigma,\epsilon}(Q)^{\frac 1p}.
\]
\end{Theorem}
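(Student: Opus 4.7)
The plan is to exploit that for $p\le r$ one has the pointwise domination $\ell^r\subset\ell^p$ in counting measure, which bypasses the principal-cube machinery used for the $p>r$ case. Since $p/r\le 1$, subadditivity of $t\mapsto t^{p/r}$ on nonnegative $t$ gives
\[
\bigl(A_{\mathcal S}^r(f\sigma)\bigr)^p=\Bigl(\sum_{Q\in\mathcal S}\langle f\sigma\rangle_Q^r\mathbf 1_Q\Bigr)^{p/r}\le\sum_{Q\in\mathcal S}\langle f\sigma\rangle_Q^p\mathbf 1_Q\quad\text{pointwise.}
\]
Integrating against $w$ and factoring $\langle f\sigma\rangle_Q=\langle f\rangle_Q^\sigma\langle\sigma\rangle_Q$ yields
\[
\|A_{\mathcal S}^r(f\sigma)\|_{L^p(w)}^p\le\sum_{Q\in\mathcal S}(\langle f\rangle_Q^\sigma)^p\langle\sigma\rangle_Q^p w(Q).
\]

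Writing $N$ for the right-hand side of the claimed bound, the entropy hypothesis rearranges to $\langle w\rangle_Q\langle\sigma\rangle_Q^{p-1}\rho_{\sigma,\epsilon}(Q)\le N^p$ for every cube $Q$. Expanding $w(Q)=\langle w\rangle_Q|Q|$ and extracting one factor of $\langle\sigma\rangle_Q$ gives
\[
\langle\sigma\rangle_Q^p w(Q)=\langle\sigma\rangle_Q\cdot\bigl(\langle w\rangle_Q\langle\sigma\rangle_Q^{p-1}\bigr)\cdot|Q|\le\frac{N^p\,\sigma(Q)}{\rho_{\sigma,\epsilon}(Q)},
\]
so the previous display reduces to the Carleson embedding
\[
\sum_{Q\in\mathcal S}(\langle f\rangle_Q^\sigma)^p\frac{\sigma(Q)}{\rho_{\sigma,\epsilon}(Q)}\lesssim\|f\|_{L^p(\sigma)}^p,
\]
which is exactly Lemma~\ref{lm:ls}. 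That lemma's hypothesis $\int_1^\infty dt/(t\epsilon(t))<\infty$ is implied by the assumed $\int_1^\infty dt/(t\epsilon(t)^{1/p})<\infty$, since $\epsilon(t)\ge 1$ and $p\ge 1$ force $\epsilon(t)^{1/p}\le\epsilon(t)$, hence $1/\epsilon(t)\le 1/\epsilon(t)^{1/p}$.

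I do not expect a serious obstacle. The only conceptual point is to recognize that, unlike the $p>r$ regime, the regime $p\le r$ permits a direct termwise domination of the $\ell^r$-sum defining $A_{\mathcal S}^r$ by an $\ell^p$-sum, after which the entropy bump slots straight into the Carleson embedding of Lemma~\ref{lm:ls}. An alternative plan would be to slice $\mathcal S$ into sub-collections $\mathcal S_{a,b}$ indexed by the sizes of $\langle w\rangle_Q^{1/p}\langle\sigma\rangle_Q^{1/p'}\rho_{\sigma,\epsilon}(Q)^{1/p}$ and of $\rho_\sigma(Q)$, apply Theorem~\ref{thm:r} in its localized form on each slice (as in the proof of the separated bound \eqref{eq:sep}), and sum using the integrability of $1/(t\epsilon(t)^{1/p})$; this also works, but is a detour given the directness of the approach above.
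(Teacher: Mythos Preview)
Your argument is correct and is more direct than the paper's. The paper first extracts from the proof of Theorem~\ref{thm:r} the testing-type reduction
\[
\|A_{\mathcal S}^r(\cdot\,\sigma)\|_{L^p(\sigma)\to L^p(w)}\lesssim\sup_{R\in\mathcal S}\frac{\bigl\|\sum_{Q\in\mathcal S,\,Q\subset R}\langle\sigma\rangle_Q\mathbf 1_Q\bigr\|_{L^p(w)}}{\sigma(R)^{1/p}},
\]
and then defers to the argument of \cite{LS}, which proceeds by the $\mathcal S_{a,b}$ slicing you mention as your ``alternative plan'': one freezes $\rho_\sigma(Q)\sim 2^b$ and the entropy product $\sim 2^a$, applies the localized $A_p$--$A_\infty$ bound of Theorem~\ref{thm:r} on each slice, and sums the geometric series in $a$ together with $\sum_{b\ge 0}\epsilon(2^b)^{-1/p}\lesssim\int_1^\infty dt/(t\epsilon(t)^{1/p})$. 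Your route bypasses both the testing reduction and the slicing by feeding the entropy bump straight into the Carleson embedding of Lemma~\ref{lm:ls}; this is cleaner and shorter.

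One small caveat: your invocation of Lemma~\ref{lm:ls} needs $\int_1^\infty dt/(t\epsilon(t))<\infty$, which you deduce from the stated hypothesis $\int_1^\infty dt/(t\epsilon(t)^{1/p})<\infty$ via the assertion ``$\epsilon(t)\ge 1$''. The paper does not state this explicitly, though it is the standard convention in the entropy-bump literature (and harmless to impose, since $\rho_\sigma(Q)\ge 1$ always). If one insists on allowing $\epsilon$ to dip below $1$ near $t=1$, the slicing route still goes through verbatim with only the weaker integral condition, which is a mild advantage of the paper's approach; under the usual normalization $\epsilon\ge 1$ the two hypotheses coincide and your proof is complete as written.
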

\begin{proof}
From the proof of Theorem~\ref{thm:r}, we know that
\[
\|A_{\mathcal S}^r(\cdot \sigma)\|_{L^p(w)\rightarrow L^p(\sigma)}\le \sup_{R\in\mathcal S} \frac{\Big\| \sum_{\substack{ Q\in \mathcal S\\ Q\subset R }} \langle\sigma\rangle_Q \mathbf 1_Q\Big\|_{L^p(w)}}{\sigma(R)^{1/p}}.
\]
Then by the same argument as that in \cite{LS}, we can get the conclusion.
\end{proof}
Now with Theorems~\ref{thm:ent} and \ref{thm:ent1}, Theorem~\ref{thm:m2} follows.

\end{document}